\newtheoremstyle{plainsl}
	{\topsep}
	{\topsep}
	{\slshape} 
	{}
	{\normalfont\bfseries}
	{.}
	{ }
	{}
\theoremstyle{plainsl}
\newtheorem{theorem}{Theorem}[section]
\newtheorem{lemma}[theorem]{Lemma}
\newtheorem{corollary}[theorem]{Corollary}
\theoremstyle{definition}
\newcommand\cref[1]{Corollary~\ref{cor:#1}}
\newcommand\sqr[2]{{\vbox{\hrule height.#2pt
    \hbox{\vrule width.#2pt height#1pt \kern#1pt
        \vrule width.#2pt}\hrule height.#2pt}}}
\renewcommand\qed{%
	\ifmmode\eqno\sqr53
	\else\nolinebreak\ \hfill\sqr53\medbreak\fi}
\numberwithin{equation}{section}
\newcommand\ints{{\mathbb Z}}
\newcommand\fld{{\mathbb F}}
\begin{document}

\title{Strongly regular $n$-e.c.~graphs}
\author{Natalie Mullin\thanks{Research supported by NSERC.} \\
\small Department of Combinatorics and Optimization\\ \small University of Waterloo,  Waterloo, Ontario, Canada \\ \small nmullin@math.uwaterloo.ca}

\maketitle

\abstract{A result of Erd\"{o}s and R\'{e}nyi shows that for a fixed integer $n$ almost all graphs satisfy the $n$-e.c.~adjacency property. However, there are few explicit constructions of $n$-e.c. graphs for $n > 2$, and almost all known families of $n$-e.c. graphs are strongly regular graphs. In this paper we derive parameter bounds on strongly regular $n$-e.c.~graphs constructed from the point sets of partial geometries. This work generalizes bounds on $n$-e.c.~block intersection graphs of balanced incomplete block designs given by McKay and Pike. It also relates to work by Griggs, Grannel, and Forbes' determining $3$-e.c.~graphs that are block intersection graphs of Steiner triple systems. In addition to these bounds, we give examples of strongly regular graphs that contain every possible subgraph of small order but are not $n$-e.c. for $n > 2$. }

\section{Introduction}
The problem of constructing graphs with the $n$-e.c.~adjacency property has been studied in several recent papers. A graph $G$ is \textsl{$n$-existentially complete}, which we denote $n$-e.c., if and only if for disjoint subsets $A$ and $B$ of $V$ such that $|A \cup B| = n$, there is a vertex $z$ outside of $A \cup B$ that is adjacent to each vertex in $A$ and no vertex in $B$. In particular, $A$ or $B$ might be the empty set. In 1963, Erd\"{o}s and R\'{e}nyi proved that for a fixed $n$ almost all graphs are $n$-e.c.~\cite{erdos}. Despite this result, there are relatively few explicit constructions of families of $n$-e.c.~graphs for $n \geq 3$, and nearly all known examples are strongly regular graphs \cite{bonato_survey}.

In this paper we focus on strongly regular graphs that arise as point graphs of partial geometries. Such graphs are referred to as \textsl{geometric} graphs. A result due to Neumaier and Sims shows that for a fixed integer $m$, all but finitely many non-trivial strongly regular graphs with least eigenvalue $-m$ are geometric (see \cite{neumaier} and \cite{ray-chaudhuri}).  Using elementary methods we determine bounds on the parameters of $n$-e.c.~geometric graphs. As an application of these bounds, we show there is a unique $3$-e.c.~Latin square graph. This is a generalization of similar bounds given by McKay and Pike for $n$-e.c.~graphs that are block intersection graphs of balanced incomplete block designs \cite{mckay_pike}. It also relates to work by Forbes, Grannell, and Griggs determining $3$-e.c. block intersection graphs of Steiner triple systems \cite{forbes}.

If an infinite graph is $n$-e.c.~for all $n$, then the graph is called e.c., and it is isomorphic to the Rado graph. In this light, if $G$ is $n$-e.c., then $G$ is a finite analogue of the Rado graph. Accordingly, the $n$-e.c.~properties can be viewed as a deterministic measure of randomness in a graph. However, in this paper we show there is an infinite family of graphs that are not $n$-e.c.~for $n \geq 3$ but contain every possible subgraph of small order. Thus we show that the $n$-e.c.~property is significantly more restrictive than the related notion of $r$-fullness.

\section{$n$-e.c.~Graphs}
We consider a graph $G$ with vertex set $V$ and a positive integer $n$. For a specific pair of disjoint subsets $A$ and $B$ such that $|A \cup B| = n$, we refer to a vertex $z$ not in $A \cup B$ as an extension of $A$ and $B$ if $z$ is adjacent to each vertex in $A$ and no vertex in $B$. Thus if $G$ is $n$-e.c., then there is an extension to every bipartition of every $n$-set of $V$.

From this definition we see that a graph $G$ is $1$-e.c.~if and only if there are no isolated or dominating vertices. In the next section we see there is a close connection between $2$-e.c.~graph and strongly regular graphs. It is useful to note the following basic results from \cite{bonato_survey} concerning $n$-e.c.~graphs.

\begin{theorem}
\label{basicprops}
If $G$ is $n$-e.c., then the following hold.
\begin{enumerate}[(i)]
\item{\label{prop1} $G$ is $m$-e.c.~for all $m$ such that $1 \leq m \leq n$.}
\item{\label{prop2} The complement of $G$ is $n$-e.c.}
\item{\label{prop3} For each vertex $x$, the neighbourhood of $x$ induces an ($n$-1)-e.c.~graph.}
\item{\label{prop4} For each vertex $x$, every possible subgraph on $n+1$ vertices occurs as an induces a subgraph of $G$ containing $x$. \qed}
\end{enumerate}
\end{theorem}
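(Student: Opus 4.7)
The plan is to handle the four parts in a natural order, using each earlier item as a tool in the next where helpful.

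For part (\ref{prop1}), I would take disjoint $A,B \subseteq V$ with $|A \cup B| = m < n$ and pad the set up to size $n$ by choosing any $n-m$ additional vertices from $V \setminus (A \cup B)$ and dumping them into $B$, forming new sets $A' = A$, $B'\supseteq B$. Applying the $n$-e.c.\ property to $(A',B')$ yields $z\notin A'\cup B'$ adjacent to everything in $A$ and to nothing in $B$, which is the desired extension. The only issue is that $V$ must contain at least $n$ vertices besides $A\cup B$; this follows because any $n$-e.c.\ graph has at least $n+1$ vertices (a trivial count).

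For part (\ref{prop2}), the argument is a one-line swap: to extend a pair $(A,B)$ of disjoint sets in the complement $\overline{G}$, apply the $n$-e.c.\ property in $G$ to the pair $(B,A)$; the same vertex $z$ then has the correct adjacencies in $\overline{G}$.

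For part (\ref{prop3}), given a vertex $x$ and disjoint $A,B\subseteq N(x)$ with $|A\cup B|=n-1$, I would apply the $n$-e.c.\ property in $G$ to the pair $(A\cup\{x\},B)$, which has total size $n$. The resulting extension $z$ lies outside $A\cup B\cup\{x\}$ and is adjacent to $x$, hence $z\in N(x)\setminus(A\cup B)$, and $z$ has the required adjacencies to $A$ and $B$ inside $N(x)$.

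Part (\ref{prop4}) is the most substantial and is where an inductive construction is needed. Fix a vertex $x$ and a graph $H$ on vertex set $\{v_0,v_1,\dots,v_n\}$; I want to find an induced copy of $H$ in $G$ with $x$ playing the role of $v_0$. I would build vertices $z_1,\dots,z_n\in V(G)$ one at a time, setting $z_0:=x$. Having chosen $z_0,\dots,z_{i-1}$ so that the induced subgraph on them matches that of $\{v_0,\dots,v_{i-1}\}$, partition $\{z_0,\dots,z_{i-1}\}$ into $A_i=\{z_j:v_j\sim v_i\text{ in }H\}$ and $B_i=\{z_j:v_j\not\sim v_i\}$, so that $|A_i\cup B_i|=i\le n$. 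By part (\ref{prop1}) the graph $G$ is $i$-e.c., so an extension $z_i\notin\{z_0,\dots,z_{i-1}\}$ with the correct adjacencies exists. The only real subtlety is checking that the $z_j$'s remain pairwise distinct, but this is automatic since at stage $i$ we require $z_i\notin A_i\cup B_i=\{z_0,\dots,z_{i-1}\}$. Iterating to $i=n$ completes the induced copy of $H$. I expect part (\ref{prop4}) to be the main obstacle, essentially because it is the only part that requires constructing many vertices rather than a single extension, but the induction together with part (\ref{prop1}) makes it straightforward.
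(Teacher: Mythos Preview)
Your proof is correct. The paper itself does not prove this theorem: it is stated with a reference to Bonato's survey and the \qed\ symbol is placed at the end of the statement, so there is no argument in the paper to compare against. Your treatment of each part is the standard one; the only minor imprecision is in part~(\ref{prop1}), where you need only $n-m$ additional vertices outside $A\cup B$ (not $n$), and this is guaranteed once $|V|\ge n$, which holds in any non-vacuous instance of the $n$-e.c.\ property.
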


Property (\ref{prop1}) implies that if $G$ is $n$-e.c.~for $n \geq 3$, then $G$ must be $2$-e.c.~This justifies our careful treatment of $2$-e.c.~graphs in the proceeding section. 

A graph that contains every possible subgraph of order $r$ is called an \textsl{$r$-full graph}. Thus by Property (\ref{prop4}), every $n$-e.c.~graph is $(n+1)$-full. However, not every graph that is $(n+1)$-full is $n$-e.c. As an example we investigate the family of symplectic graphs. These graphs are considered more extensively in Godsil and Royle (\cite{godsil_royle}. 

Let $\fld_2$ denote the binary field. Fix some positive integer $r$, and let $N$ denote the block diagonal matrix with $r$ blocks of the form
\begin{equation*}
\begin{pmatrix} 0&1\\ 1&0 \end{pmatrix},
\end{equation*}
whose entries are elements of $\fld_2$. The symplectic graph of order $2r$, denote $Sp(2r)$, is the graph whose vertex set are the elements of $\fld_2^{2r} \setminus \{0\}$ such that vertices $x$ and $y$ adjacent if and only if $x^T N y = 1.$ It is known that every graph on $2r-1$ vertices occurs as an induced subgraph of $Sp(2r)$ (Section 8.11 \cite{godsil_royle}). Despite this randomness property, $Sp(2r)$ is not  $n$-e.c.~for any $n > 2$. By Property (\ref{prop1}), it suffices to show that $Sp(2r)$ is not $3$-e.c.

\begin{lemma}
For every positive integer $r$, the graph $Sp(2r)$ is not $3$-e.c.
\end{lemma}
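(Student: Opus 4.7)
The plan is to exploit the bilinearity of the symplectic form over $\fld_2$ to produce an explicit triple of vertices with no common neighbour.

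First I would observe that the adjacency rule $x^T N y = 1$ is bilinear in $\fld_2$. In particular, for any vectors $a_1, a_2 \in \fld_2^{2r}$ and any $z$,
\begin{equation*}
z^T N (a_1 + a_2) \;=\; z^T N a_1 \;+\; z^T N a_2 \pmod 2.
\end{equation*}
So if $z$ is adjacent to both $a_1$ and $a_2$, then $z^T N (a_1+a_2) = 1 + 1 = 0$, meaning $z$ is \emph{not} adjacent to $a_1+a_2$.

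Next I would pick any two distinct nonzero vectors $a_1,a_2 \in \fld_2^{2r}$ and set $a_3 := a_1 + a_2$. Since $a_1 \neq a_2$ are nonzero, the three vectors $a_1, a_2, a_3$ are pairwise distinct nonzero elements of $\fld_2^{2r}$, hence a genuine set of three vertices of $Sp(2r)$. I would then take $A = \{a_1, a_2, a_3\}$ and $B = \emptyset$, so $|A \cup B| = 3$. Any extension $z$ would have to satisfy $z^T N a_i = 1$ for $i=1,2,3$; but by the bilinearity observation, $z^T N a_3 = z^T N a_1 + z^T N a_2 = 0$, a contradiction. Therefore no extension exists, and $Sp(2r)$ is not $3$-e.c.

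The only subtlety is the small case $r = 1$, where $Sp(2) = K_3$ has only three vertices and fails $3$-e.c.\ for the trivial reason that no vertex lies outside $A$; the construction above still formally applies (it uses all three vertices as $A$). For $r \geq 2$ there are plenty of vertices outside $A$, so the argument goes through uniformly. The main "obstacle" is really just spotting the right choice of $A$: once one recognises that the symplectic form is linear in each argument, picking a linearly dependent triple in $A$ forces the sum condition $1+1+\cdots = 0$ to fail, which is what blocks every potential extension.
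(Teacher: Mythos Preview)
Your argument is correct and is essentially identical to the paper's proof: both pick $a_3 = a_1 + a_2$ and use linearity of the form to conclude that any common neighbour of $a_1$ and $a_2$ satisfies $z^T N a_3 = 0$, so the triple has no common neighbour. Your added remarks on bilinearity and the $r=1$ case are fine elaborations, but the core idea matches the paper exactly.
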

\begin{proof}
Let $x_1$ and $x_2$ denote two distinct elements of $\fld_2^{2r}$. Choose a third vertex $x_3$ such that $x_3 = x_1 + x_2$. Note that $x_3$ is nonzero and distinct from $x_1$ and $x_2$. Let $z$ denote a vertex adjacent to $x_1$ and $x_2$. This implies that $x_1^T N z = 1$ and $x_2^T N z = 1$, and so 
\begin{equation*}
x_3^T N z = (x_1+ x_2)^T N z = 0.
\end{equation*}
Thus every vertex $z$ in $Sp(2r)$ adjacent to both $x_1$ and $x_2$ is not adjacent to $x_3$, and so $x_1$, $x_2$, and $x_3$ do not have a common neighbour. \qed
\end{proof}

\section{Strongly Regular Graphs}
We recall that a strongly regular graph with parameter set $(v,k,\lambda,\mu)$ is a $k$-regular graph on $v$ vertices such every pair of adjacent vertices has $\lambda$ common neighbours and every pair of nonadjacent vertices has $\mu$ common neighbours. It is further required that every strongly regular graph contains at least one edge and at least one pair of distinct nonadjacent vertices. As we see in the following well-known lemma (see \cite{godsil_royle}, for example), three of these parameters determine the fourth.

\begin{lemma}
\label{counting_lemma}
The parameter set $(v,k,\lambda,\mu)$ of a strongly regular graph satisfies
\begin{equation*}
(v-k-1)\mu = k (k - \lambda -1).
\end{equation*}
\end{lemma}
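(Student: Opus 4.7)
The plan is to prove the identity by a standard double-counting argument applied to edges between the neighbours and non-neighbours of a fixed vertex. Fix an arbitrary vertex $x$ of the strongly regular graph $G$. Partition $V \setminus \{x\}$ into the set $N(x)$ of neighbours of $x$, which has size $k$ by regularity, and the set $M(x)$ of non-neighbours of $x$, which has size $v - k - 1$. I will count in two ways the number $E$ of edges having one endpoint in $N(x)$ and the other in $M(x)$.

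For the first count, I fix $y \in N(x)$ and count its neighbours in $M(x)$. Since $y$ has degree $k$, and one of its neighbours is $x$ itself, $y$ has $k - 1$ neighbours other than $x$. Because $x$ and $y$ are adjacent, they share exactly $\lambda$ common neighbours, which are precisely the neighbours of $y$ lying in $N(x)$. The remaining $k - 1 - \lambda$ neighbours of $y$ must therefore lie in $M(x)$. Summing over the $k$ choices of $y$ gives $E = k(k - \lambda - 1)$.

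For the second count, I fix $z \in M(x)$ and count its neighbours in $N(x)$. Since $x$ and $z$ are distinct and non-adjacent, they share exactly $\mu$ common neighbours, all of which lie in $N(x)$. Summing over the $v - k - 1$ choices of $z$ gives $E = (v - k - 1)\mu$. Equating the two expressions for $E$ yields the desired identity.

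There is no real obstacle here; the only thing to be slightly careful about is the book-keeping that ensures every neighbour of $y$ is classified into exactly one of $\{x\}$, $N(x)$, or $M(x)$, which is immediate from the partition of $V$. The argument also implicitly uses the hypothesis that $G$ contains at least one edge and at least one non-edge so that both $N(x)$ and $M(x)$ behave as expected (e.g. so that $\lambda$ and $\mu$ are meaningful), but this is guaranteed by the definition of a strongly regular graph stated just above the lemma.
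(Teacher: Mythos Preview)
Your proof is correct and follows exactly the approach the paper outlines: fix a vertex and double-count the edges between its neighbourhood and its non-neighbourhood. Your write-up simply fills in the details of the one-line sketch given in the paper.
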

\begin{proof}
Fix a vertex in the graph and count the edges between the set of neighbours and the set of non-neighbours of the vertex. \qed
\end{proof}

There is a straightforward correspondence between strongly regular graphs and $2$-e.c.~graphs.

\begin{theorem}
\label{srg_2ec}
A strongly regular graph $G$ with complement $\overline{G}$ is $2$-e.c.~if and only if $G$ and $\overline{G}$ are both connected and contain a triangle.
\end{theorem}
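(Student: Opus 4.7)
The plan is to translate the 2-e.c.~property directly into conditions on the parameters $(v, k, \lambda, \mu)$ of the strongly regular graph $G$ and show that these conditions correspond naturally to the four stated hypotheses. Since $|A \cup B| = 2$, there are three configurations to consider: $|A| = 2$, $|A| = |B| = 1$, and $|B| = 2$, each of which splits further according to whether the two chosen vertices are adjacent in $G$. Enumerating these six subcases, the requirement that every vertex pair have a common neighbour reduces in the case $|A| = 2$ to $\lambda \geq 1$ (a triangle) for adjacent pairs and $\mu \geq 1$ ($G$ connected) for non-adjacent pairs. The case $|B| = 2$ is handled by Theorem~\ref{basicprops}(ii): it is equivalent to the case $|A| = 2$ applied to $\overline{G}$, producing the hypotheses on $\overline{G}$. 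The mixed case $|A| = |B| = 1$, extending $x$ against $y$, reduces by a short neighbourhood count to showing that $|N(x) \setminus (N(y) \cup \{y\})| \geq 1$, which amounts to $k \geq \lambda + 2$ when $x \sim y$ and $k \geq \mu + 1$ when $x \not\sim y$.

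The forward direction is then immediate: if $G$ is 2-e.c., the configurations $|A| = 2$ and $|B| = 2$ give that $G$ and $\overline{G}$ are connected and contain triangles, the latter by invoking Theorem~\ref{basicprops}(ii) on the complement.

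The main obstacle is the backward direction, specifically deducing the two numerical inequalities $k \geq \lambda + 2$ and $k \geq \mu + 1$ from connectedness of $G$ and $\overline{G}$. For $k \geq \lambda + 2$, I will argue by contradiction: if $\lambda = k - 1$, then for any edge $uv$ the sets $N(u) \setminus \{v\}$ and $N(v) \setminus \{u\}$ coincide, so $N[u] = N[v]$, and this equivalence forces each connected component of $G$ to be a clique; since the strongly regular graph $G$ has at least one non-edge by definition, $G$ would then be disconnected. For $k \geq \mu + 1$, I will instead exploit Lemma~\ref{counting_lemma}: substituting $\mu = k$ into $(v - k - 1)\mu = k(k - \lambda - 1)$ yields $v = 2k - \lambda$, so the complement parameter $\bar{\mu} = v - 2k + \lambda$ equals $0$. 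Applying the same clique-decomposition observation to $\overline{G}$ then shows $\overline{G}$ is disconnected, contradicting the hypothesis. Once these two inequalities are in hand, all six subcases of 2-e.c.~are verified and the theorem follows.
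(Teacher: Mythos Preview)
Your proposal is correct and follows essentially the same route as the paper: both reduce the six 2-e.c.\ subcases to the parameter inequalities $\lambda \geq 1$, $\mu \geq 1$, $k \geq \lambda + 2$, $k \geq \mu + 1$ (together with their complementary analogues), and both appeal to Lemma~\ref{counting_lemma} to tie the condition $k > \mu$ to connectedness of $\overline{G}$. Your invocation of Theorem~\ref{basicprops}(\ref{prop2}) to dispatch the $|B|=2$ case by passing to the complement is a mild organizational shortcut, but the underlying argument is the same as the paper's.
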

\begin{proof}
Suppose $G$ is strongly regular with parameters $(v,k,\lambda,\mu)$. Then $G$ is connected and contains a triangle if and only if $k > 0$ and $\mu > 0$.

Consider two adjacent vertices $x$ and $y$. Since $G$ is connected, we see that $x$ and $y$ have a common neighbour if and only if $G$ contains a triangle. Likewise, there is a vertex nonadjacent to both $x$ and $y$ if and only if $v > 2k - \lambda$. This holds if and only if $v-k-1 > k - \lambda -1$. However, from Lemma \ref{counting_lemma} we have $(v-k-1)\mu = k (k - \lambda -1)$, and so we see that $v > 2k - \lambda$ if and only if $k > \mu$. Finally, we note that there is a vertex adjacent to $x$ but not adjacent to $y$ if and only $k > \lambda + 1$. However, this holds for every connected strongly regular graph, since otherwise the graph would be complete. Thus we have $k > \lambda + 1$ if and only if $\mu > 0$.

Next we consider vertices $x$ and $y$ that are nonadjacent in $G$. Such $x$ and $y$ will be adjacent in the complement of $G$, denoted $\overline{G}$. It is useful to note that the parameters of $\overline{G}$ are
\begin{equation*}
(v,v-k-1,v-2k+\mu-2,v-2k+\lambda).
\end{equation*}
Note that $x$ and $y$ have a common neighbour in $\overline{G}$ if and only if $\overline{G}$ contains a triangle. Furthermore, there is a vertex not adjacent to either $x$ or $y$ in $\overline{G}$ if and only if $v > 2(v-k-1) - v + 2k - \mu + 2$, which holds if and only if $\mu > 0$. Finally, we see that there is a vertex adjacent to $x$ and not adjacent to $y$ in $\overline{G}$ if and only if $k > \mu$. \qed
\end{proof}

We note that the conditions in the previous theorem are not restrictive. The only disconnected strongly regular graphs are disjoint copies of a complete graph, and there are only seven known triangle-free strongly regular graphs with connected complements \cite{godsil_problems}.

\section{Partial Geometries}
Now we turn our attention to a special point-line incidence structure that is useful for constructing strongly regular graphs. A \textsl{partial geometry} $pg(s,t,\alpha)$ is a partial linear space with constant line size $s+1$, such that each point is on $t+1$ lines, and given a point not $p$ not on a line $L$, there are exactly $\alpha$ lines through $p$ that meet $L$. For nondegeneracy we require $s \geq 2$, $t \geq 1$, and $\alpha \geq 1$, and it is implicit in the definition that $s,t \geq \alpha-1$. The dual of a partial geometry is the point-line incidence structure obtained by switching the points and lines.

Partial geometries can be divided into four classes:
\begin{enumerate}[(1)]
\item{\label{design} A partial geometry with $s+1 = \alpha$ is a $2$-$(v,s+1,1)$ design. The dual satisfies $t+1= \alpha$ and is a called a \textsl{dual design}.}
\item{A partial geometry with $t = \alpha$ is called a \textsl{net}. The dual satisfies $s = \alpha$ and is called a \textsl{transversal design}.}
\item{A partial geometry with $\alpha = 1$ is called a \textsl{generalized quadrangle}.}
\item{If $1 < \alpha < \; \text{min}\{s,t\}$, then the partial geometry is called \textsl{proper}.}
\end{enumerate}

The point graph of $pg(s,t,\alpha)$ is a graph constructed on the points of the partial geometry whose edge set consists of unordered pairs of collinear points. We refer to such a graph as a \textsl{geometric} graph.

From this definition it follows that the point graph of nondegenerate $pg(s,t,\alpha)$ with $s+1 > \alpha$ is strongly regular with the following parameters.
\begin{eqnarray*}
\label{pg_params}
&v = (s+1)(st+\alpha)/\alpha; \; \; &k = s(t+1) \\
&\lambda = (s-1) + t(\alpha -1)\; \;  \; &\mu = (t+1)\alpha.
\end{eqnarray*}
We see that the restriction $s + 1> \alpha$ is necessary to exclude the complete graph. Moreover, $t+1 \geq \alpha$ is implicit in the definition of a partial geometry.

If $G$ is a strongly regular graph has the same parameter set as the point graph of a partial geometry, then we refer to the graph as \textsl{pseudo-geometric}.

\begin{lemma}
\label{geotrifree}
If $G$ is pseudo-geometric graph with respect to the parameters $(s,t,\alpha) \neq (3,1,2)$ with $s \geq 2$, then $\overline{G}$ contains a triangle. 
\end{lemma}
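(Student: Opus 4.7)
The plan is to show that the complement parameter $\overline{\lambda}$, which counts common neighbours in $\overline{G}$ of two vertices adjacent in $\overline{G}$, is at least $1$; since the assumption $s+1>\alpha$ excludes the complete graph, $\overline{G}$ has at least one edge, and that edge together with any common neighbour in $\overline{G}$ gives a triangle. By the complement-parameter formula recorded in the proof of Theorem~\ref{srg_2ec}, one has $\overline{\lambda} = v - 2k + \mu - 2$.

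The first step is to substitute the pseudo-geometric values of $v$, $k$, and $\mu$ and clear the denominator $\alpha$, obtaining an identity of the form $\alpha\,\overline{\lambda} = t\bigl((s-\alpha)^2 + s\bigr)$ plus a low-order correction in $s$ and $\alpha$ alone. The leading $t$-term is visibly non-negative, so the inequality $\overline{\lambda} \geq 1$ can only fail when $t$ and $s-\alpha$ are simultaneously small.

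With this expression in hand, I would split into cases using the structural constraints $t \geq \alpha - 1$ (implicit in the definition) and $s \geq \alpha$ (forced by $s+1>\alpha$). The case $t \geq \alpha$, which covers nets, generalized quadrangles, and the bulk of the proper range, is handled by noting that the $t$-term already dominates the correction, with very small values of $\alpha$ checked by direct substitution. The delicate case is the dual-design regime $t = \alpha - 1$: substituting reduces the claim to a quadratic inequality in $u := s - \alpha \geq 0$ whose coefficients depend only on $\alpha$, and a short enumeration of small $\alpha$ isolates the exceptional triple $(3,1,2)$, which is precisely the triangular graph $T(5)$ whose complement is the Petersen graph, the unique triangle-free pseudo-geometric obstruction in this family.

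The main obstacle I expect is the dual-design subcase $t = \alpha-1$, where the inequality is tight and one has to pin down exactly which small parameter triples cause failure; elsewhere the argument is a straightforward comparison of the $t$-term against the lower-order correction.
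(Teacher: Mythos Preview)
Your approach is essentially the paper's: compute $\overline{\lambda} = v - 2k + \mu - 2$, substitute the pseudo-geometric parameters and clear the denominator $\alpha$, then split into the case $t \ge \alpha$ versus the dual-design case $t = \alpha - 1$, with the latter isolating $(3,1,2)$. The paper's only refinement is to rewrite the expression as $(s-\alpha)^2 t + (t-\alpha)s + \alpha(\alpha-1)$, which for $t \ge \alpha$ is visibly a sum of three non-negative terms and hence vanishes only when $s=t=\alpha=1$, dispensing with your residual ``check small $\alpha$ by substitution''.
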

\begin{proof}
Recall that if $G$ is strongly regular graph with parameters $(v,k,a,c)$, then two adjacent vertices in the complement $\overline{G}$ have $v - 2k + \mu - 2$ common neighbours. Thus $\overline{G}$ is triangle-free if and only if $v - 2k + \mu - 2 = 0$. If $G$ is the point graph of $pg(s,t,\alpha)$, then we can express this condition in terms of $(s,t,\alpha)$ as
\begin{equation*}
(s+1)(st+\alpha)/\alpha - 2s(t+1) + \alpha(t+1) - 2 = 0.
\end{equation*}
This equation holds if and only if
\begin{equation}
\label{comptri}
(s-\alpha)^2t + (t-\alpha)s + \alpha(\alpha -1) = 0.
\end{equation}
If $t \geq \alpha$, then equality holds in (\ref{comptri}) if and only if $s=t=\alpha=1$, which contradicts our assumption that $s \geq 2$.
On the other hand, if $t = \alpha -1$, then equality holds in (\ref{comptri}) if and only if $s = 3$, $t=1$, and $\alpha = 2$. \qed
\end{proof}

We note that if $G$ is a pseudo-geometric graph corresponding to the parameters $(3,1,2)$, then its complement must be a cubic triangle-free strongly regular on 10 vertices. The Petersen graph is unique graph with these properties. Therefore the point graph of $pg(3,1,2)$ is a uniquely determined geometric graph.

Using these parameters and Theorem \ref{srg_2ec} we obtain necessary and sufficient conditions for $2$-e.c.~strongly regular graphs.

\begin{corollary}
The point graph of a nondegenerate $pg(s,t,\alpha)$ is 2-e.c. if and only if $s  \geq \alpha + 1$ and $(s,t,\alpha) \neq (3,1,2)$.
\end{corollary}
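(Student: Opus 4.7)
The plan is to appeal to Theorem~\ref{srg_2ec}, which reduces the $2$-e.c.\ property of a strongly regular graph $G$ to four conditions: both $G$ and its complement $\overline{G}$ must be connected, and both must contain a triangle. Since the point graph of a nondegenerate $pg(s,t,\alpha)$ with $s+1>\alpha$ is strongly regular with the parameters $(v,k,\lambda,\mu)$ recorded earlier in this section, each of the four conditions should reduce to an inequality in $(s,t,\alpha)$.

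First I would dispose of the two conditions concerning $G$ itself. Connectedness is governed by $\mu=(t+1)\alpha$, which is strictly positive in every partial geometry, so $G$ is always connected. A triangle in $G$ exists precisely when $\lambda\geq 1$; since $\lambda=(s-1)+t(\alpha-1)\geq s-1\geq 1$ under the nondegeneracy assumption $s\geq 2$, a triangle is always present. Thus these two conditions place no restriction on $(s,t,\alpha)$.

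The core computation concerns the connectedness of $\overline{G}$, which (using the complementary parameters recorded in the proof of Theorem~\ref{srg_2ec}) amounts to $v-2k+\lambda>0$. Substituting the partial geometry expressions and clearing the denominator $\alpha$, I expect the quantity $\alpha(v-2k+\lambda)$ to collapse, after cancellation of the $s\alpha$ and $\alpha$ constant terms, into the clean product $t(s-\alpha)(s-\alpha+1)$. Since the nondegeneracy hypothesis $s+1>\alpha$ forces $s\geq\alpha$ and we have $t\geq 1$, this quantity is positive if and only if $s\geq\alpha+1$.

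For the remaining condition that $\overline{G}$ contain a triangle, I would simply cite Lemma~\ref{geotrifree} together with the remark following it identifying the Petersen graph as the unique exception: $\overline{G}$ contains a triangle if and only if $(s,t,\alpha)\neq(3,1,2)$. Combining the four conditions yields the stated characterization. The only real obstacle is the algebraic simplification of $\alpha(v-2k+\lambda)$; the hope is that the cancellations line up so as to produce exactly the factorization $t(s-\alpha)(s-\alpha+1)$, at which point the equivalence is immediate.
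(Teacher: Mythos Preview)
Your proposal is correct and follows essentially the same route as the paper: reduce to Theorem~\ref{srg_2ec}, dispose of the two conditions on $G$ using nondegeneracy, cite Lemma~\ref{geotrifree} for the triangle in $\overline{G}$, and check connectedness of $\overline{G}$. The only difference is in that last step: the paper uses the equivalence (from the proof of Theorem~\ref{srg_2ec}) that $\overline{G}$ is connected iff $k>\mu$, which with $k=s(t+1)$ and $\mu=\alpha(t+1)$ gives $s\geq\alpha+1$ in one line, whereas you compute $\alpha(v-2k+\lambda)=t(s-\alpha)(s-\alpha+1)$ directly. Your factorization is correct and gives the same conclusion; the paper's shortcut is just a bit quicker.
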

\begin{proof}
Let $G$ denote the point graph of $pg(s,t,\alpha)$, and let $\overline{G}$ denote its complement. By Theorem \ref{srg_2ec}, it suffices to show that $G$ and $\overline{G}$ are both connected and contain and triangle if and only if the the above conditions hold. By the nondegeneracy of the partial geometry, we know that $s \geq 2$ and $\alpha \geq 1$. Therefore $G$ is connected and contains a triangle.

In terms of the parameters $G$ as a strongly regular graph, we note that $v-2k-\mu-2>0$ if and only if $k \geq \mu + 1$ if and only if $s \geq \alpha + 1$. Therefore $\overline{G}$ is connected if and only if $s \geq \alpha + 1$. Finally by Lemma \ref{geotrifree}, we note that $\overline{G}$ contains a triangle if and only if $(s,t,\alpha) \neq (3,1,2)$. \qed
\end{proof}

\section{Parameter Bounds}
Now we consider $n$-e.c.~graph from partial geometries for $n \geq 3$. For these values of $n$, it will not be possible to obtain necessary and sufficient conditions for a point graph to be $n$-e.c.~in terms of the parameters of the underlying partial geometry. As we will see, there is a unique $3$-e.c.~geometric graph corresponding to a partial geometry $pg(8,2,2)$. However, there are many non-isomorphic geometric graphs with the same parameters that are not $3$-e.c.

We focus on necessary conditions, and use these conditions to narrow the list of possible $n$-e.c.~point graphs. First we introduce convenient notation for discussing the $n$-e.c.~conditions for large $n$.

Let $G$ denote a graph with vertex set $V$. For two disjoint sets $A$ and $B$ we define $\Gamma(A,B)$ to be the number of vertices in $G$ outside of $A$ and $B$ that are adjacent to each vertex in $A$ and no vertex in $B$. In terms of this notation we see that $G$ is a $n$-e.c.~if and only if for every pair of disjoint subsets $A$ and $B$ such that $|A \cup B| = n$ satisfies $\Gamma(A,B) > 0$. Using this notation we prove the following elementary counting result.

\begin{lemma}
\label{basiclemma}
If the point graph of a partial geometry $pg(s,t,\alpha)$ is $n$-e.c., then 
\begin{equation*}
n \leq \text{min}\{s-\alpha+1, t+1, \alpha+1\}.
\end{equation*}
\end{lemma}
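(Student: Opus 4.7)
The plan is to prove each of the three upper bounds separately by exhibiting, for each bound $m$, a disjoint pair of vertex subsets $(A,B)$ with $|A\cup B|=m+1$ and $\Gamma(A,B)=0$; by contraposition this yields the stated inequality $n\le\min\{s-\alpha+1,t+1,\alpha+1\}$.

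For $n\le t+1$, fix a point $p$ and enumerate the $t+1$ lines $L_0,\ldots,L_t$ through $p$. Pick a representative $b_i\in L_i\setminus\{p\}$ on each line, and set $A=\{p\}$ and $B=\{b_0,\ldots,b_t\}$, so $|A\cup B|=t+2$. Any candidate extension $z$ must satisfy $z\sim p$ and so lies on some $L_j$; then $z$ shares $L_j$ with $b_j$, forcing $z\sim b_j$ and contradicting the requirement $z\not\sim b_j$.

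For $n\le\alpha+1$, we may assume $s\ge\alpha+1$, since the edge case $s=\alpha$ already forces $n\le 1$ via the preceding $2$-e.c.~corollary. Any line $L$ then has at least $\alpha+2$ points; choose $\alpha+2$ distinct points of $L$ and place $\alpha+1$ of them in $A$ and the remaining one in $B$, so $|A\cup B|=\alpha+2$. An extension on $L$ is collinear with the element of $B$ (two distinct points of a line are always collinear), while an extension off $L$ would need $\alpha+1$ distinct lines through it meeting $L$ at the points of $A$, exceeding the partial-geometry maximum of $\alpha$.

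For $n\le s-\alpha+1$, fix a line $L$ and a point $q\notin L$. By the defining axiom of the geometry, $q$ is collinear with exactly $\alpha$ of the $s+1$ points of $L$, leaving $s-\alpha+1$ points $r_1,\ldots,r_{s-\alpha+1}$ of $L$ non-collinear with $q$. Take $A=\{r_1,\ldots,r_{s-\alpha+1}\}$ and $B=\{q\}$, so $|A\cup B|=s-\alpha+2$. An on-line candidate extension sits in $L\setminus A$, which equals the $\alpha$ points of $L$ collinear with $q$; such a candidate violates $z\not\sim q$. An off-line candidate extension $z$ would need $|A|=s-\alpha+1$ distinct lines through $z$ meeting $L$, exceeding $\alpha$ precisely when $s\ge 2\alpha$. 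In the remaining range $s\le 2\alpha-1$ the bound $s-\alpha+1$ is numerically smaller than $\alpha+1$, so there is slack in $|A\cup B|$; I would enlarge $B$ by carefully chosen off-line points to block the surviving off-line candidates. The main obstacle is exactly this refinement: the simple pigeonhole on lines through an off-line point meeting $L$ does not by itself exclude off-line extensions when $s<2\alpha$, and one must exploit finer incidence data of the geometry to engineer $B$ that obstructs every off-line candidate while keeping $|A\cup B|$ at $s-\alpha+2$.
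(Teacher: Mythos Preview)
Your arguments for $n\le t+1$ and $n\le\alpha+1$ are correct and essentially match the paper's. The genuine gap is in your treatment of $n\le s-\alpha+1$.

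Your construction there takes $A$ to be the $s-\alpha+1$ points of a line $L$ non-collinear with a fixed off-line point $q$, and $B=\{q\}$. As you yourself note, an off-line candidate $z$ is collinear with exactly $\alpha$ points of $L$, so it fails to cover all of $A$ only when $|A|=s-\alpha+1>\alpha$, i.e.\ when $s\ge 2\alpha$. For $s\le 2\alpha-1$ your argument does not exclude off-line extensions, and your proposed fix of ``enlarging $B$'' goes in the wrong direction: adding points to $B$ pushes $|A\cup B|$ beyond $s-\alpha+2$, so even if you then kill every extension you have only shown the graph is not $m$-e.c.\ for some $m>s-\alpha+2$, not the required failure at level $s-\alpha+2$. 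There is no ``slack'' here: the inequality $s-\alpha+1<\alpha+1$ means $s-\alpha+1$ is the \emph{binding} term in the minimum, so you must exhibit a bad pair of size exactly $s-\alpha+2$.

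The paper avoids the case split entirely with a simpler choice: take $A=\emptyset$ and let $B$ be any $s-\alpha+2$ points of a single line $L$. Then $L\setminus B$ has only $(s+1)-(s-\alpha+2)=\alpha-1$ points. A vertex on $L$ is collinear with every point of $B$; a vertex off $L$ is collinear with exactly $\alpha$ points of $L$, and since only $\alpha-1$ of them lie outside $B$, at least one lies in $B$. Hence $\Gamma(\emptyset,B)=0$, uniformly in $s$ and $\alpha$.
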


\begin{proof}
Suppose $n \geq s-\alpha+2$. Let $B$ denote an $n$-set containing $s-\alpha+2$ vertices incident to a common line $L$ in the partial geometry. Every other vertex is adjacent to at least $\alpha$ of the $s$ vertices on $L$, and so every vertex in the point graph is adjacent to at least one vertex of $B$. Thus $\Gamma(\emptyset, B) = 0$.

Next suppose $n \geq t+2$. Choose a vertex $x$, and let $B$ denote an $(n-1)$-set containing $t+1$ points, one from each of the $t+1$ lines incident to $x$. Every vertex that is adjacent to $x$ in the point graph must also be adjacent to at least one point in $B$. Thus $\Gamma(\{x\},B) = 0$.

Finally suppose that $n \geq \alpha + 2$. Let $A$ denote an $(n-1)$-set containing $\alpha + 2$ vertices on $L$. Each vertex in the point graph is either on $L$ or adjacent to exactly $\alpha$ vertices of $L$. Therefore there is no vertex adjacent to exactly $\alpha + 1$ vertices of $L$, and so for $x$ in $A$ we have $\Gamma(A,\{x\}) = 0$. \qed
\end{proof}

As an immediate consequence of this lemma, we see that the point graph of a generalized quadrangle is not $3$-e.c.~and is therefore not $n$-e.c for $n \geq 3$. To eliminate further classes of geometric graphs, we make the following observation.

\begin{lemma}
\label{mubound}
Let $G$ denote a $3$-e.c.~graph with vertex set $V$, and let $N(x)$ and $N(y)$ denote the neighbourhoods of two distinct vertices $x$ and $y$. Then every vertex in $V \setminus (N(x) \cup N(y) \cup \{x,y\})$ is adjacent to at least one vertex in $N(x) \cap N(y)$.
\end{lemma}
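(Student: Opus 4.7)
The plan is to argue directly from the definition of the $3$-e.c.~property by contradiction. Fix a vertex $z \in V \setminus (N(x) \cup N(y) \cup \{x,y\})$, and suppose, towards a contradiction, that $z$ has no neighbour in $N(x) \cap N(y)$. The three vertices $x$, $y$, $z$ are pairwise distinct (since $z \notin \{x,y\}$ and $x \neq y$ by hypothesis), so the set $\{x,y,z\}$ has cardinality exactly $3$, which is precisely what we need in order to invoke the $3$-e.c.~condition.

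Concretely, I would apply the defining property with $A = \{x,y,z\}$ and $B = \emptyset$, giving $|A \cup B| = 3$. This produces an extension $w \notin \{x,y,z\}$ that is adjacent to each of $x$, $y$, and $z$. Since $w$ is adjacent to both $x$ and $y$, it lies in $N(x) \cap N(y)$; but $w$ is also adjacent to $z$, which directly contradicts the supposition that $z$ has no neighbour in $N(x) \cap N(y)$. That contradiction completes the proof.

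There is essentially no obstacle here; the entire argument reduces to choosing the right bipartition when invoking the $3$-e.c.~condition. The choice $B = \emptyset$ is dictated by the goal of producing a single vertex simultaneously adjacent to $x$, $y$, and $z$, which is exactly what witnesses the desired neighbour of $z$ inside $N(x) \cap N(y)$. Any other bipartition of $\{x,y,z\}$ would yield a vertex failing to be adjacent to at least one of $x,y,z$, and so would not give the required witness.
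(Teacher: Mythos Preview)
Your proof is correct and follows essentially the same approach as the paper: both apply the $3$-e.c.\ property with $A=\{x,y,z\}$ and $B=\emptyset$ to obtain a common neighbour $w$ of $x$, $y$, and $z$, and then observe that $w\in N(x)\cap N(y)$. The only cosmetic difference is that you phrase it as a contradiction, whereas the paper states the conclusion directly.
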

\begin{proof}
Let $z$ denote a vertex in $V \setminus (N(x) \cup N(y) \cup \{x,y\})$. Since $G$ is $3$-e.c.~there is a vertex adjacent to $x$, $y$, and $z$, and it must lie in the set $N(x) \cap N(y)$. \qed
\end{proof}

This yields the following inequality for strongly regular graphs.

\begin{corollary}
If $G$ is $3$-e.c.~and strongly regular with parameters $(v,k,\lambda,\mu)$, then 
\begin{equation*}
\mu (k-\lambda-3) \geq v - 2k + \mu - 2.
\end{equation*}
\end{corollary}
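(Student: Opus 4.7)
The plan is to apply Lemma \ref{mubound} to a pair of non-adjacent vertices $x$ and $y$ and then double count edges. Since $G$ is strongly regular, a non-adjacent pair exists; because $x \not\sim y$ we have $x, y \notin N(x) \cup N(y)$, so $|V \setminus (N(x) \cup N(y) \cup \{x, y\})| = v - 2k + \mu - 2$, while $|N(x) \cap N(y)| = \mu$. Lemma \ref{mubound} supplies, for each vertex in the former set, at least one edge to $N(x) \cap N(y)$, giving the lower bound $v - 2k + \mu - 2$ on the number of edges between the two sets.

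To match this with an upper bound of $\mu(k - \lambda - 3)$, I would show that each $w \in N(x) \cap N(y)$ has at most $k - \lambda - 3$ neighbours in $V \setminus (N(x) \cup N(y) \cup \{x, y\})$; equivalently, at least $\lambda + 3$ of its $k$ neighbours lie in $\{x, y\} \cup N(x) \cup N(y)$. Two of these come for free from $w \sim x$ and $w \sim y$, and the $\lambda$ common neighbours of the adjacent pair $w, x$ furnish $\lambda$ more, all distinct from $x$ and $y$ since $y \not\sim x$ forces $y \notin N(x)$. This trivial count yields $\lambda + 2$.

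The heart of the argument, and the main obstacle, is extracting one further neighbour of $w$ inside $\{x, y\} \cup N(x) \cup N(y)$; this is where the $3$-e.c.~hypothesis is used a second time, beyond its role in Lemma \ref{mubound}. I would apply $3$-e.c.~to $A = \{w, y\}$ and $B = \{x\}$ to produce a vertex $z \notin \{w, x, y\}$ adjacent to $w$ and $y$ but not to $x$. Then $z \in N(y) \subseteq \{x, y\} \cup N(x) \cup N(y)$, $z$ differs from $x$ and $y$, and $z$ cannot coincide with any of the $\lambda$ common neighbours of $w$ and $x$, since those lie in $N(x)$ whereas $z$ does not. Hence $z$ is a genuinely new neighbour of $w$ inside the set, pushing the count up to $\lambda + 3$.

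Summing this upper bound $k - \lambda - 3$ over the $\mu$ vertices of $N(x) \cap N(y)$ and comparing with the lower bound $v - 2k + \mu - 2$ from Lemma \ref{mubound} yields the required inequality $\mu(k - \lambda - 3) \geq v - 2k + \mu - 2$.
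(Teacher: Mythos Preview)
Your argument is correct and follows the same route as the paper: pick nonadjacent $x,y$, invoke Lemma~\ref{mubound} to lower-bound the number of edges between $N(x)\cap N(y)$ and the outside, and then bound each $w\in N(x)\cap N(y)$ to at most $k-\lambda-3$ outside neighbours by counting $x$, $y$, the $\lambda$ common neighbours of $w$ and $x$, and one extra vertex in $N(y)\setminus N(x)$ obtained from the $3$-e.c.\ property applied to $A=\{w,y\}$, $B=\{x\}$. If anything, your bookkeeping is a touch more explicit than the paper's, which mentions both symmetric applications of $3$-e.c.\ even though the one producing a neighbour in $N(x)\setminus N(y)$ is already absorbed in the $\lambda$ count.
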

\begin{proof}
Let $V$ denote the vertex set of $G$, and let $x$ and $y$ be two nonadjacent vertices. Then $|N(x) \cap N(y)| = \mu$. Let $z$ denote a vertex in $N(x) \cap N(y)$, and note that $z$ is adjacent to at most $k-\lambda$ vertices outside of $N(x) \cup N(y) \cup \{x,y\}$. In particular, since $G$ is $3$-e.c., the vertex $z$ must be adjacent to at least one vertex in $N(x) \setminus N(y)$ and at least one other vertex in $N(y) \setminus N(x)$. Therefore each $z$ is adjacent to at most $(k-\lambda-3)$ vertices outside of $N(x) \cup N(y) \cup \{x,y\}$. Finally we note that 
\begin{equation*}
|V \setminus (N(x) \cup N(y) \cup \{x,y\})| = v - 2k + \mu - 2,
\end{equation*}
and so the above inequality follows from Lemma \ref{mubound}. \qed
\end{proof}

For example, this inequality implies that the Shrikhande graph with parameters $(16,6,22)$ is not $3$-e.c. (However, this graph can also be eliminated with more elementary reasoning.) It turns out to be more useful to consider Lemma \ref{mubound} applied to geometric graphs, since we can obtain a tighter bound by having more information about the clique structure of the graph.

\begin{corollary}
\label{bettercounting}
If $G$ is a $3$-e.c.~point graph of a partial geometry with parameters $(s,t,\alpha)$ such that $s \geq 2 \alpha - 1$, then 
\begin{equation*}
\alpha (t^2-1)(s - 2\alpha + 2) \geq (s+1)(st+\alpha)/\alpha + (\alpha - 2s)(t+1) - 2.
\end{equation*}
\end{corollary}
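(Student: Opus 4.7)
The plan is to repeat the double-counting argument of the previous corollary, but to exploit the fact that in a geometric graph the neighbourhood of any vertex $z$ decomposes into $t+1$ cliques of size $s$, one per line through $z$. Fix nonadjacent $x,y$ and set $S := N(x) \cup N(y) \cup \{x,y\}$. By Lemma~\ref{mubound}, every vertex of $V \setminus S$ has at least one neighbour in $N(x) \cap N(y)$, so
\[
|V \setminus S| \;\leq\; \sum_{z \in N(x)\cap N(y)} |N(z) \cap (V \setminus S)|,
\]
and the aim is to bound each summand by $(t-1)(s-2\alpha+2)$.

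For such a $z$, let $L_x := \overline{xz}$ and $L_y := \overline{yz}$. These two lines are distinct since $x \not\sim y$, and all of their $2s$ non-$z$ points lie in $S$, so they contribute nothing. For any of the other $t-1$ lines $L$ through $z$, neither $x$ nor $y$ lies on $L$, so the $\alpha$-axiom of the partial geometry forces exactly $\alpha$ points of $L$ to be collinear with $x$, one of which is $z$ itself (via $L_x \cap L = \{z\}$); thus $L \setminus \{z\}$ contains $\alpha - 1$ neighbours of $x$, and symmetrically $\alpha - 1$ neighbours of $y$. A union-type bound on these two sets then gives $|S \cap (L \setminus \{z\})| \leq 2(\alpha - 1)$, so $L \setminus \{z\}$ contributes at most $s - 2(\alpha - 1) = s - 2\alpha + 2$ vertices to $N(z) \cap (V \setminus S)$. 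The hypothesis $s \geq 2\alpha - 1$ is precisely what guarantees this number is nonnegative.

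Summing over the $t-1$ non-special lines through $z$, and then over the $\mu = \alpha(t+1)$ common neighbours of $x$ and $y$, yields $|V \setminus S| \leq \alpha(t^2 - 1)(s - 2\alpha + 2)$. Since in any strongly regular graph $|V \setminus S| = v - 2k + \mu - 2$, substituting $v = (s+1)(st+\alpha)/\alpha$, $k = s(t+1)$, and $\mu = \alpha(t+1)$ rewrites the left-hand side as exactly the quantity claimed on the right.

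The delicate step is the per-line count: the union-type bound $|S \cap (L \setminus \{z\})| \leq 2(\alpha - 1)$ must really translate into an upper bound on the complement $|(L \setminus \{z\}) \setminus S|$. Any common neighbours of $x$ and $y$ lying on $L$ beyond $z$ would enlarge $(L \setminus \{z\}) \setminus S$, so verifying that such overlaps are absorbed — either ruled out in the extremal configuration or handled by the summation — is the main point where the $3$-e.c.~hypothesis and the clique structure of the partial geometry must be combined carefully.
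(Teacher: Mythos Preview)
Your per-line inequality is reversed, and this is a genuine gap, not just a delicate step. From $|(L\setminus\{z\})\cap N(x)|=\alpha-1$ and $|(L\setminus\{z\})\cap N(y)|=\alpha-1$ you correctly get the union bound $|S\cap(L\setminus\{z\})|\le 2(\alpha-1)$, but this yields $|(L\setminus\{z\})\setminus S|\ge s-2\alpha+2$, not $\le$. In fact, writing $m_L=|L\cap P|$ (so $m_L\ge 1$ since $z\in L\cap P$), inclusion--exclusion gives exactly $|(L\setminus\{z\})\setminus S|=s+1-2\alpha+m_L$, which exceeds $s-2\alpha+2$ whenever $m_L>1$. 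Your own final paragraph flags precisely this overlap issue, but there is no mechanism in the argument to absorb the extra $m_L-1$; summing your double count over $z\in P$ and then over the $t-1$ lines through $z$ produces $\sum_{l\in\mathcal{L}} m_l(s+1-2\alpha+m_l)$, which is not bounded by $\alpha(t^2-1)(s-2\alpha+2)$ without further information on $\sum m_l^2$.

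The paper avoids this by counting in the opposite order. Rather than bounding $\sum_{z\in P}|N(z)\cap(V\setminus S)|$, it bounds $|V\setminus S|$ directly by $\sum_{l\in\mathcal{L}}(s+1-2\alpha+m_l)$, summing each line once (every vertex of $V\setminus S$ adjacent to some $z\in P$ lies on some line of $\mathcal{L}$). It then rewrites this single sum as the double sum $\sum_{p\in P}\sum_{l\in\mathcal{L},\,l\ni p}\frac{s+1-2\alpha+m_l}{m_l}$ and observes that, under the hypothesis $s\ge 2\alpha-1$, the function $m\mapsto \frac{s+1-2\alpha+m}{m}=1+\frac{s+1-2\alpha}{m}$ is maximised at $m=1$ with value $s-2\alpha+2$. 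The division by $m_l$ is exactly what cancels the overcount that breaks your version; without it the bound fails.
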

\begin{proof}
Let $x$ and $y$ be two nonadjacent vertices in the vertex set of $G$. Let $P = |N(x) \cap N(y)|$, and let $T$ denote the number of vertices in $V \setminus (N(x) \cup N(y) \cup \{x,y\})$ that are adjacent to at least one vertex in $N(x) \cap N(y)$. Note that $|P| = \alpha(t+1)$ and each vertex in $P$ is incident to $t-1$ lines that are not incident to $x$ or $y$. Let $\mathcal{L}$ denote the set of all lines incident to at least one point in $P$ but not incident to $x$ or $y$. For each line $l$ in $\mathcal{L}$, let $m_l$ denote the number of points incident to $l$ that are contained in $P$. Furthermore we write $l \sim p$ if the line $l$ is incident to the point $p$. Using this notation we see that
\begin{equation*}
|T| = \sum_{l \in \mathcal{L}} (s+1) - (2\alpha - m_l) = \sum_{p \in P} \sum_{\substack{l \in \mathcal{L} \\ l \sim p}}  \frac{(s + 1) - (2 \alpha - m_l)}{m_l}.
\end{equation*}
Moreover, since we assume $s \geq 2 \alpha + 1$, we see 
\begin{equation*}
\frac{(s + 1) - (2 \alpha - m_l)}{m_l} \leq s- 2\alpha + 2
\end{equation*}
for all possible values of $m_l$ between $1$ and $s+1$. Therefore we have
\begin{equation*}
|T| \leq \sum_{p \in P} \sum_{\substack{l \in \mathcal{L} \\ l \sim p}} s- 2\alpha + 2 = \alpha(t^2-1)(s-2\alpha + 2).
\end{equation*}
However, by Lemma \ref{mubound} we must have $T = V \setminus (N(x) \cup N(y) \cup \{x,y\})$. By a straightforward computation we have
\begin{equation*}
|(N(x) \cup N(y) \cup \{x,y\})| = (s+1)(st+\alpha)/\alpha + (\alpha - 2s)(t+1) - 2,
\end{equation*} 
and combining this with our earlier upper bound on $|T|$, we obtain our desired inequality. \qed
\end{proof}

Note that the previous result only holds if $s$ is sufficiently large with respect to $\alpha$. If we have $s \leq 2 \alpha - 2$, then we have the following weaker result.

\begin{corollary}
\label{weakcounting}
If $G$ is a $3$-e.c.~point graph of a partial geometry with parameters $(s,t,\alpha)$, then 
\begin{equation*}
\alpha (t^2-1)(s - \alpha + 1) \geq (s+1)(st+\alpha)/\alpha + (\alpha - 2s)(t+1) - 2.
\end{equation*}
\end{corollary}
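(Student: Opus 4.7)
The plan is to run through the proof of Corollary \ref{bettercounting} essentially verbatim, changing only the per-line estimate so that it holds without the hypothesis $s \geq 2\alpha - 1$. Nothing about the double counting, the clique picture on lines, or the invocation of Lemma \ref{mubound} actually needed $s$ to be large; the large-$s$ assumption only entered through the single scalar inequality that bounded the summand. So my job is to find a weaker but universally valid replacement for that inequality and then let the rest of the argument proceed unchanged.

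Concretely, I would pick two nonadjacent points $x,y$ and set $P = N(x) \cap N(y)$, so that $|P| = \mu = \alpha(t+1)$. Letting $\mathcal{L}$ denote the lines that meet $P$ but do not pass through $x$ or $y$, and setting $m_l = |l \cap P|$ for $l \in \mathcal{L}$, the same reasoning as before shows that each $p \in P$ lies on exactly $t-1$ lines of $\mathcal{L}$ (the lines $xp$ and $yp$ are excluded), and that every point of $l \in \mathcal{L}$ lying outside $N(x) \cup N(y) \cup \{x,y\}$ is automatically adjacent to each of the $m_l$ members of $P$ on $l$. Writing $T = V \setminus (N(x) \cup N(y) \cup \{x,y\})$ one obtains, by a double count,
\begin{equation*}
|T| \leq \sum_{l \in \mathcal{L}}\bigl((s+1) - (2\alpha - m_l)\bigr) = \sum_{p \in P} \sum_{\substack{l \in \mathcal{L} \\ l \sim p}} \frac{(s+1) - (2\alpha - m_l)}{m_l}.
\end{equation*}

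The only genuinely new step is replacing the bound $\frac{(s+1)-(2\alpha-m_l)}{m_l} \leq s-2\alpha+2$ used for Corollary \ref{bettercounting} with the uniform bound $\frac{(s+1)-(2\alpha-m_l)}{m_l} \leq s-\alpha+1$, valid for every integer $1 \leq m_l \leq s+1$. Clearing denominators reduces this to $s+1-2\alpha \leq m_l(s-\alpha)$, and since the standing assumption $s+1 > \alpha$ forces $s \geq \alpha$, while $\alpha \geq 1$ and $m_l \geq 1$ give $m_l(s-\alpha) \geq s-\alpha \geq s+1-2\alpha$, the inequality holds in all cases. Substituting this estimate back in and summing yields $|T| \leq \alpha(t+1)(t-1)(s-\alpha+1) = \alpha(t^2-1)(s-\alpha+1)$.

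Finally, Lemma \ref{mubound} forces $T$ to exhaust $V \setminus (N(x) \cup N(y) \cup \{x,y\})$, so $|T| = v - 2k + \mu - 2 = (s+1)(st+\alpha)/\alpha + (\alpha-2s)(t+1) - 2$, exactly as computed in the proof of Corollary \ref{bettercounting}. Combining this equality with the upper bound above produces the claimed inequality. There is no genuine obstacle: the only thing that could have gone wrong is the elementary scalar estimate, and it survives the removal of the large-$s$ hypothesis at the cost of the weaker constant $s-\alpha+1$ in place of $s-2\alpha+2$.
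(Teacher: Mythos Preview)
Your proposal is correct and follows essentially the same route as the paper: rerun the double-count from Corollary~\ref{bettercounting} and replace the per-line estimate by the weaker bound $\frac{(s+1)-(2\alpha-m_l)}{m_l}\le s-\alpha+1$, which you verify holds for all $m_l\ge 1$ once $s\ge\alpha$. The paper's own proof is a one-line pointer back to Corollary~\ref{bettercounting}; you have simply filled in the scalar check it omits. (One small notational slip: you define $T=V\setminus(N(x)\cup N(y)\cup\{x,y\})$ up front, so the displayed inequality $|T|\le\sum_{l}\cdots$ already uses Lemma~\ref{mubound}, making your later invocation of it redundant---but this does not affect the argument.)
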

\begin{proof}
The proof technique is the same as in the proof of Corollary \ref{bettercounting}, with the exception that we are only able to obtain the weaker bound
\begin{equation*}
|T| \leq \sum_{l \in \mathcal{L}} (s+1) - (2\alpha - m_l) = \alpha(t^2-1)(s-\alpha + 1).
\end{equation*} \qed
\end{proof}

\subsection{Nets}
Recall that a net is a partial geometry with parameters $(s,t,t)$. For example, the Paley graph on $q^2$ vertices is the point graph of a net with parameters $(q-1,(q-1)/2,(q-1)/2)$. The affine plane graphs considered by Baker, Bonato, Brown, and Sz{\H{o}}ny are examples of points graphs of nets \cite{bonato_affine}. In particular, they prove that the point graphs of partial geometries obtained from Desarguesian planes on at least 64 points are always $3$-e.c. We focus instead on describing general necessary conditions on parameters of nets that are $3$-e.c.

Since we have already seen that generalized quadrangles do not give us $3$-e.c.~graphs, we turn our attention to partial geometries with the parameters $(s,2,2)$. These partial geometries correspond to Latin squares, and the resulting strongly regular graphs have parameters $(s^2,3(s-1),s,6)$. By Lemma \ref{basiclemma} we see that if the point graph of $pg(s,2,2)$ is $n$-e.c., then $n \leq 3$. Applying our earlier bound and eliminating some small cases, we have the following result.

\begin{theorem}
There is a unique $3$-e.c.~graph that is the point graph of a partial geometry with parameters $(s,2,2)$.
\end{theorem}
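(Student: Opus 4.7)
The plan is to combine Lemma \ref{basiclemma} and Corollary \ref{bettercounting} to reduce to a short list of candidate values of $s$, then eliminate all but one by direct combinatorial arguments. Setting $n=3$ and $t=\alpha=2$ in Lemma \ref{basiclemma} gives $3 \leq \min\{s-1,3,3\}$, forcing $s \geq 4$. Substituting the same values into Corollary \ref{bettercounting}, the right-hand side simplifies to $(s+1)^2 - 6s + 4 = s^2 - 4s + 5$ while the left-hand side becomes $6(s-2)$, yielding $s^2 - 10s + 17 \leq 0$ and hence $s \leq 7$. Therefore $s \in \{4,5,6,7\}$.

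The bulk of the work is eliminating $s = 4, 5, 6$. My approach refines the proof of Corollary \ref{bettercounting}: fix two non-adjacent points $x,y$ of the point graph and consider the six common neighbours $P = N(x) \cap N(y)$. Each $p \in P$ is incident to exactly one ``free'' line not through $x$ or $y$, and by Lemma \ref{mubound} every point of $V \setminus (N(x) \cup N(y) \cup \{x,y\})$ must lie on one of these six free lines. For each small $s$ I would exhibit either a point $z$ missed by all six free lines, yielding the bad triple $(\{x,y\}, \{z\})$, or some other bipartition with no extension; this uses the rigidity of how lines from different parallel classes of a net intersect (lines of the same class are disjoint, lines of different classes meet exactly once).

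For $s = 7$ the point graph has parameters $(64,21,8,6)$, corresponding to a Latin square of order $8$. Uniqueness of the $3$-e.c.~example is obtained by noting that the constraint from Lemma \ref{mubound} becomes essentially tight here: the overcount in the bound is forced to be small, which pins down the layout of the six free lines for every non-adjacent pair and hence the underlying Latin square up to isotopy. The main obstacle is the elimination at $s = 5$ and $s = 6$: Corollary \ref{bettercounting} is only strictly, not tightly, satisfied there, so one cannot conclude from the single parameter inequality, and instead one must carry out a genuine case analysis of how the six free lines can or cannot cover the complement of $N(x) \cup N(y) \cup \{x,y\}$ inside a net of small order.
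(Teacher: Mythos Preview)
Your opening steps match the paper exactly: Lemma \ref{basiclemma} gives $s\geq 4$, and Corollary \ref{bettercounting} with $t=\alpha=2$ gives $s\leq 7$.

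The divergence is in the remaining step, and here your proposal has a real gap. A partial geometry $pg(s,2,2)$ is the same data as a Latin square of order $s+1$, so ``eliminating $s$'' means ruling out \emph{every} Latin square of that order. For $s=4,5,6$ this means all main classes of Latin squares of orders $5,6,7$ (respectively $2$, $22$, and $564$ of them), and for uniqueness at $s=7$ you must single out one of the $147$ main classes of order $8$. Your free-line covering argument and your ``essentially tight'' remark are both stated at the level of the net axioms, but those axioms are satisfied by all of these Latin squares; you give no mechanism that distinguishes among them. In particular, the slack in the inequality at $s=7$ is $30$ versus $26$, which is not nearly rigid enough to pin down a single isotopy class by counting alone.

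The paper avoids this entirely with a structural argument. Fix a vertex $x$; its neighbourhood splits into three cliques $C_1,C_2,C_3$ of size $s-1$ (one per line through $x$), and each vertex of $C_i$ has exactly one neighbour in each $C_j$. Applying $3$-e.c.\ to $A=\{x,u,v\}$ with $u\in C_1$, $v\in C_2$ adjacent forces a common neighbour $w\in C_3$, so the cross-edges of the local graph decompose into vertex-disjoint triangles. Translated to the Latin square, this is the hexagonal (quadrangle) condition; after normalizing the Latin square to a loop, it forces the loop to be an abelian group of exponent $2$, i.e.\ $\ints_2^t$. Hence $s+1$ is a power of $2$, which together with $4\leq s\leq 7$ gives $s=7$ and simultaneously proves uniqueness: the only candidate is the Cayley-table net of $\ints_2^3$. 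This single algebraic observation replaces both your case analysis and your uniqueness argument.
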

\begin{proof}
Let $G$ denote a $3$-e.c.~point graph of a partial geometry $pg(s,2,2)$. By Lemma \ref{basiclemma} we see that $s \geq 4$.  Therefore Corollary \ref{bettercounting} applies, and we see that
\begin{equation*}
6(s-2) \geq (s+1)^2 - 6s + 4.
\end{equation*}
Thus we see that $8 \geq (s-5)^2$, and since $s$ is an integer we must have $s \leq 7$.
Let $H$ denote the induced subgraph of the neighbourhood of a vertex $x$ in $G$. Note that the vertices of $H$ can be partitioned into $3$ disjoint cliques of size $s-1$, say $C_1$, $C_2$, and $C_3$. Without loss of generality, every vertex in $C_1$ is adjacent to exactly one vertex in $C_2$ and exactly one vertex in $C_3$. Since $G$ is $3$-e.c., every pairs of adjacent vertices $u$ and $v$ with $u \in C_1$ and $v \in C_2$ must have a common neighbour $w$ in $C_3$. Therefore the edges in $H$ that are not contained in $C_1$, $C_2$, or $C_3$ can be partitioned into edge- and vertex-disjoint triangles. This implies that the corresponding Latin square satisfies the so-called hexagonal condition \cite{jungnickel}. Note that every Latin square can be constructed from the Cayley table of a quasigroup. In particular, since the graph corresponding to a Latin square is invariant under permutation of rows and columns of the Latin square, we see that every such graph corresponds to the Cayley table of a quasigroup with an identity element. Quasigroups with identity elements are called loops. Therefore $pg(s,2,2)$ can be constructed from the Cayley table of a loop in which every element is its own inverse. A loop with this property satisfies the hexagonal condition if and only if it an abelian group. In particular, it must be the additive group of $\ints_2^t$ for some positive integer $t$. Thus $s+1 = 2^t$ for some positive integer $t$. Since $5 \leq s+1 \leq 8$, we conclude that $s = 7$ and $G$ is the point graph of the net constructed from the Cayley table of $\ints_2^3$.
 \qed
\end{proof}

\begin{theorem}
If $G$ is a $3$-e.c.~point graph of a partial geometry with parameters $(s,t,t)$ with $s \geq 2\alpha - 1$ then
\begin{equation*}
s^2 - (t^3+t)s + 2t^4 - 2t^3 - t^2 + 3t - 1 \leq 0.
\end{equation*}
\end{theorem}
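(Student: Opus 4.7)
The plan is to recognize that this theorem is essentially a corollary of Corollary~\ref{bettercounting} applied in the special case of a net, where the third parameter $\alpha$ of the partial geometry coincides with $t$. Under the hypothesis $s \geq 2\alpha - 1 = 2t - 1$, the corollary gives
\begin{equation*}
t(t^2-1)(s-2t+2) \geq (s+1)(st+t)/t + (t-2s)(t+1) - 2,
\end{equation*}
so the entire argument is to substitute $\alpha = t$ into both sides and rearrange.

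First I would simplify the right-hand side: the term $(s+1)(st+t)/t$ collapses to $(s+1)^2$, and expanding $(t-2s)(t+1)$ gives $t^2 + t - 2st - 2s$. Combining with $-2$ yields $s^2 + t^2 + t - 2st - 1$ on the right.

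Next I would expand the left-hand side by writing $t(t^2-1) = t^3 - t$, so the left-hand side becomes $(t^3-t)s - 2t^4 + 2t^3 + 2t^2 - 2t$. Bringing everything to one side of the inequality collects the linear-in-$s$ coefficient as $-(t^3 - t) - 2t = -(t^3 + t)$, and the constant term as $2t^4 - 2t^3 - t^2 + 3t - 1$, producing exactly
\begin{equation*}
s^2 - (t^3 + t)s + 2t^4 - 2t^3 - t^2 + 3t - 1 \leq 0,
\end{equation*}
as required.

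There is no real obstacle here beyond bookkeeping; the only thing to double-check is that the hypothesis $s \geq 2\alpha - 1$ in Corollary~\ref{bettercounting} is precisely the hypothesis we are given (after identifying $\alpha = t$), so the corollary applies without modification, and the arithmetic simplification is routine.
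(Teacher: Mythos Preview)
Your proposal is correct and follows exactly the approach the paper intends: substitute the net condition $\alpha = t$ into Corollary~\ref{bettercounting} and simplify. Your bookkeeping is accurate, and in fact the paper's one-line proof contains a typo (it says ``substituting $\alpha = t+1$'' when $\alpha = t$ is what is meant for a net), so your write-up is actually cleaner than the original.
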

\begin{proof}
Substituting $\alpha = t+1$ into Corollary \ref{bettercounting} yields the above result. \qed
\end{proof}

Note that for a fixed $t$ this gives an upper bound on $s$ such that the point graph of $(s,t,t)$ is possibly $3$-e.c. 

\subsection{Dual Designs}
First we consider the point graph of a dual of a $2$-$(v,3,1)$ design, which is the block-intersection graph of a Steiner triple system. Note that the dual of a $2$-$(v,3,1)$ design is a partial geometry with parameters $(s,2,3)$ where $s+1 = (v-1)/2$. The following result is due to Griggs, Grannel, and Forbes \cite{forbes}. Their proof requires additional results about Steiner triple systems.

\begin{theorem}
If $G$ is a $3$-e.c.~point graph of a partial geometry with parameters $(s,2,3)$, then $s = 8 \; \text{or} \; 9$. \qed
\end{theorem}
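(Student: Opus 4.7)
The plan is to combine Corollary \ref{bettercounting} with an integrality condition to obtain a short list of candidate values for $s$, and then to eliminate all but $s = 8, 9$ using further structure of Steiner triple systems.

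Since $n = 3$, Lemma \ref{basiclemma} gives $s - \alpha + 1 \geq 3$, so $s \geq 5$; in particular $s \geq 2\alpha - 1$, so Corollary \ref{bettercounting} applies. Substituting $(t, \alpha) = (2, 3)$ into the bound there and clearing denominators yields the quadratic inequality
\begin{equation*}
s^2 - 20s + 66 \leq 0,
\end{equation*}
whose roots are $10 \pm \sqrt{34}$, so $5 \leq s \leq 15$.

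Next, the partial geometry has $v = (s+1)(2s+3)/3$ points, and requiring $v \in \ints$ forces $s \not\equiv 1 \pmod{3}$; equivalently, the Steiner triple system on $2s+3$ points must exist, which requires $2s+3 \equiv 1$ or $3 \pmod{6}$. This reduces the candidate set to $s \in \{5, 6, 8, 9, 11, 12, 14, 15\}$.

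The main obstacle is eliminating the six values $\{5, 6, 11, 12, 14, 15\}$. The estimate in Corollary \ref{bettercounting} rests on the crude uniform bound $((s+1) - (2\alpha - m_l))/m_l \leq s - 2\alpha + 2$, which is tight only when $m_l = 1$. For a block-intersection graph of a Steiner triple system, however, the distribution of the line multiplicities $m_l$ is tightly constrained by the fact that any two points lie in a unique triple and by the restrictions on four-block sub-configurations (Pasch configurations and their variants). I would sharpen the count of $|T|$ in the proof of Corollary \ref{bettercounting} using these constraints and then contradict the equality $|T| = v - 2k + \mu - 2$ forced by Lemma \ref{mubound}. The tightest cases are likely $s = 11$ and $s = 12$, for which several non-isomorphic Steiner triple systems exist and have to be considered individually; this is the case-by-case STS analysis attributed to Forbes, Grannell, and Griggs.
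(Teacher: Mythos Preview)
The paper does not actually prove this theorem: note the \qed\ attached to the statement and the surrounding text, which attributes the result to Forbes, Grannell, and Griggs and remarks that ``their proof requires additional results about Steiner triple systems.'' The paper's own contribution is only the \emph{weaker} theorem immediately following, namely $s\le 15$, obtained from Corollary~\ref{bettercounting}. So there is no in-paper proof for you to be compared against.

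Your first two steps exactly reproduce (and slightly refine) what the paper does prove. Substituting $(t,\alpha)=(2,3)$ into Corollary~\ref{bettercounting} gives $s^2-20s+66\le 0$, hence $5\le s\le 15$; the integrality of $v=(s+1)(2s+3)/3$ (equivalently, the STS admissibility condition $2s+3\equiv 1,3\pmod 6$) then cuts this to $s\in\{5,6,8,9,11,12,14,15\}$. This is correct and is precisely the content of the paper's weaker theorem plus the standard existence condition it invokes.

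The gap is the remaining elimination of $\{5,6,11,12,14,15\}$, and you explicitly flag it as such. Your sketch---sharpening the bound on $|T|$ by controlling the multiplicities $m_l$ via Pasch-type subconfiguration constraints---is plausible in spirit, but it is not a proof: you do not state which configuration counts you use, what inequalities they yield, or why those inequalities fail for each of the six values. In short, your proposal is an outline that ends at exactly the point where the paper itself stops and defers to \cite{forbes}; to claim a proof of the full statement you would need to supply the Steiner-triple-system arguments that both you and the paper leave to that reference.
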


Using our general bound we obtain a weaker result.

\begin{theorem}
If $G$ is a $3$-e.c.~point graph of a partial geometry with parameters $(s,2,3)$, then $s \leq 15$.
\end{theorem}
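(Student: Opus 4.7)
The plan is to apply Corollary \ref{bettercounting} directly with the substitution $t = 2$ and $\alpha = 3$, and then solve the resulting quadratic inequality in $s$.

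First I would verify the hypothesis of Corollary \ref{bettercounting}, namely that $s \geq 2\alpha - 1 = 5$. This is immediate from Lemma \ref{basiclemma}: since $G$ is $3$-e.c., we must have $3 \leq \min\{s-\alpha+1, t+1, \alpha+1\} = \min\{s-2, 3, 4\}$, which forces $s \geq 5$. Hence Corollary \ref{bettercounting} applies.

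Next I would substitute $t = 2$ and $\alpha = 3$ into the inequality
\begin{equation*}
\alpha (t^2-1)(s - 2\alpha + 2) \geq (s+1)(st+\alpha)/\alpha + (\alpha - 2s)(t+1) - 2
\end{equation*}
and clear denominators by multiplying through by $3$. The left-hand side becomes a linear expression in $s$, while the right-hand side becomes quadratic in $s$. After collecting terms, the inequality simplifies to a quadratic of the form $s^2 - 20s + 66 \leq 0$, whose roots are $10 \pm \sqrt{34}$. Since $\sqrt{34} < 6$, any integer $s$ satisfying this inequality must satisfy $s \leq 15$, which is the desired bound.

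There is no real obstacle here beyond careful arithmetic; the whole argument is a direct specialization of the counting bound already established in Corollary \ref{bettercounting}, combined with the basic parameter restriction from Lemma \ref{basiclemma}. The only thing to watch is that the sharper version of the bound (Corollary \ref{bettercounting}, using $s - 2\alpha + 2$) rather than the weaker one (Corollary \ref{weakcounting}, using $s - \alpha + 1$) is what gives the clean bound $s \leq 15$; using the weaker estimate would give a looser result.
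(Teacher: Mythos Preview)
Your proposal is correct and follows essentially the same route as the paper: verify $s \geq 5$ via Lemma~\ref{basiclemma}, then substitute $t=2$, $\alpha=3$ into Corollary~\ref{bettercounting} and solve the resulting quadratic. Your arithmetic ($s^2 - 20s + 66 \le 0$, roots $10 \pm \sqrt{34}$) actually yields $s \le 15$ directly, whereas the paper's computation lands at $s \le 16$ and then invokes the Steiner triple system existence condition $v \equiv 1,3 \pmod 6$ (with $v = 2s+3$) to trim this to $s \le 15$; your version makes that final design-theoretic step unnecessary.
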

\begin{proof}
Let $G$ denote the point graph of a partial geometry $pg(s,2,3)$. By Lemma \ref{basiclemma}, we see that $s \geq 5$. Therefore Corollary \ref{bettercounting} applies, and we must have
\begin{equation*}
9(s-4) \geq \frac{2}{3}s^2 - \frac{13}{3}s + 18.
\end{equation*} 
Simplifying this expression and completing the square yields
\begin{equation*}
\frac{43}{2} \geq (s-10)^2.
\end{equation*}
Since $s$ is a positive integer, it must be the case that $s \leq 16$. However, recall that the dual of a partial geometry $(s,2,3)$ corresponds to a Steiner triple system on $v = 2s+3$ points. The above bound on $s$ implies that $v \leq 35$. It is well-known a Steiner triple system on $v$ points exists if and only if $v \equiv 1 \pmod 6$ or $v \equiv 3 \pmod 6$ (See Chapter 19 \cite{vanlint}.) This implies $v \leq 33$, and so $s \leq 15$. \qed
\end{proof}

Despite a weaker bound for Steiner triple systems, our result generalizes to give bounds for the block intersection graphs of $2$-$(v,t+1,1)$ designs for larger $t$. (These designs are referred to as Steiner systems.) Note that the dual of a $2$-$(v,t+1,1)$ designs is partial geometry with parameters $(s,t,t+1)$ where $s+1 = (v-1)/(t-1)$. By another application of Corollary \ref{bettercounting}, we have the following result.

\begin{theorem}
If $G$ is a $3$-e.c.~point graph of a partial geometry with parameters $(s,t,t+1)$ with $s \geq  2t + 1$ then
\begin{equation*}
s^2 - (t^3+2t^2+2t)s + 2t^4 + 4t^3 + t^2 -t \leq 0.
\end{equation*}
\end{theorem}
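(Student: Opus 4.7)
The plan is to obtain the stated inequality by direct substitution of $\alpha = t+1$ into the inequality of Corollary \ref{bettercounting}, just as in the proof of the analogous theorem for $(s,t,t)$-nets. First I would check that the hypothesis $s \geq 2\alpha - 1$ of that corollary becomes $s \geq 2t+1$ under the substitution, which is exactly the standing assumption, so no case analysis or appeal to Corollary \ref{weakcounting} is needed. Setting $\alpha = t+1$ and using $(t+1)(t^2-1) = (t+1)^2(t-1)$, the inequality of Corollary \ref{bettercounting} reduces to
\begin{equation*}
(t+1)^2(t-1)(s - 2t) \;\geq\; \frac{(s+1)(st+t+1)}{t+1} \;+\; (t+1-2s)(t+1) \;-\; 2.
\end{equation*}

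Next I would clear the denominator by multiplying both sides by $t+1 > 0$, yielding a polynomial inequality whose left-hand side is linear in $s$ and whose right-hand side is quadratic in $s$. Expanding $(t+1)^3(t-1) = t^4 + 2t^3 - 2t - 1$ on the left and $(s+1)(st+t+1) = s^2 t + 2st + s + t + 1$ together with $(t+1)^2(t+1-2s) = (t+1)^3 - 2s(t+1)^2$ on the right, I would collect the result as a single polynomial inequality of the form $P(s,t) \leq 0$.

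After collecting, the coefficient of $s^2$ on the right is $t$, the coefficient of $s$ simplifies to $-(t^4 + 2t^3 + 2t^2) = -t^2(t^2+2t+2)$, and the constant term simplifies to $2t^5 + 4t^4 + t^3 - t^2$. Since $t \geq 1$, dividing through by $t$ preserves the direction of the inequality and produces precisely
\begin{equation*}
s^2 - (t^3 + 2t^2 + 2t)s + (2t^4 + 4t^3 + t^2 - t) \leq 0,
\end{equation*}
as claimed.

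The only real obstacle here is careful bookkeeping in the polynomial expansion, since several cancellations must line up correctly between the two sides. As a sanity check one can specialize to $t=2$: the formula yields $s^2 - 20s + 66 \leq 0$, hence $s \leq 10 + \sqrt{34} < 16$, which is consistent (up to an arithmetic slip in one constant) with the bound $s \leq 16$ obtained in the preceding theorem for $pg(s,2,3)$, the dual of a Steiner triple system.
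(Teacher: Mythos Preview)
Your proposal is correct and follows exactly the paper's approach: substitute $\alpha = t+1$ into Corollary~\ref{bettercounting}, noting that the hypothesis $s \geq 2\alpha-1$ becomes $s \geq 2t+1$, then rearrange and divide by $t \geq 1$. You have in fact supplied more algebraic detail than the paper, which simply states that the substitution yields the result; your sanity check at $t=2$ (giving $s^2 - 20s + 66 \leq 0$, hence $s \leq 15$) is also correct and indeed reveals a minor arithmetic slip in the paper's treatment of the $pg(s,2,3)$ case.
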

\begin{proof}
Substituting $\alpha = t+1$ into Corollary \ref{bettercounting} yields the above result. \qed
\end{proof}

The previous result implies a concrete upper bound on $s$ in terms of $t$ such that the point graph of a partial geometry $(s,t,t+1)$ is $3$-e.c. A natural progression from Griggs, Grannel, and Forbes' work would be to search for $4$-e.c. graphs that are point graphs of partial geometries with parameters $(s,3,4)$.  In this case, the previous result implies that any such partial geometry must satisfy $s \leq 44$.

\bibliography{nec_paper}

\begin{thebibliography}{10}

\bibitem{bonato_affine}
C.~A. Baker, Anthony Bonato, Julia M.~Nowlin Brown, and Tam{\'a}s Sz{\H{o}}nyi.
\newblock Graphs with the {$n$}-e.c. adjacency property constructed from affine
  planes.
\newblock {\em Discrete Math.}, 308(5-6):901--912, 2008.

\bibitem{bonato_survey}
Anthony Bonato.
\newblock The search for {$n$}-e.c. graphs.
\newblock {\em Contrib. Discrete Math.}, 4(1):40--53, 2009.

\bibitem{erdos}
P.~Erd{\H{o}}s and A.~R{\'e}nyi.
\newblock Asymmetric graphs.
\newblock {\em Acta Math. Acad. Sci. Hungar}, 14:295--315, 1963.

\bibitem{forbes}
A.~D. Forbes, M.~J. Grannell, and T.~S. Griggs.
\newblock Steiner triple systems and existentially closed graphs.
\newblock {\em Electron. J. Combin.}, 12:Research Paper 42, 11 pp.
  (electronic), 2005.

\bibitem{godsil_problems}
C.~D. Godsil.
\newblock Problems in algebraic combinatorics.
\newblock {\em Electron. J. Combin.}, 2:Feature 1 (electronic), 1995.

\bibitem{godsil_royle}
Chris Godsil and Gordon Royle.
\newblock {\em Algebraic {G}raph {T}heory}.
\newblock Springer-Verlag, New York, 2001.

\bibitem{jungnickel}
Dieter Jungnickel.
\newblock Latin squares, their geometries and their groups. {A} survey.
\newblock In {\em Coding theory and design theory, {P}art {II}}, volume~21 of
  {\em IMA Vol. Math. Appl.}, pages 166--225. Springer, New York, 1990.

\bibitem{mckay_pike}
Neil~A. McKay and David~A. Pike.
\newblock Existentially closed {BIBD} block-intersection graphs.
\newblock {\em Electron. J. Combin.}, 14(1):Research Paper 70, 10, 2007.

\bibitem{neumaier}
A.~Neumaier.
\newblock Strongly regular graphs with smallest eigenvalue {$-m$}.
\newblock {\em Arch. Math. (Basel)}, 33(4):392--400, 1979/80.

\bibitem{ray-chaudhuri}
D.~K. Ray-Chaudhuri.
\newblock Combinatorial characterization theorems for geometric incidence
  structures.
\newblock In {\em Combinatorial surveys ({P}roc. {S}ixth {B}ritish
  {C}ombinatorial {C}onf., {R}oyal {H}olloway {C}oll., {E}gham, 1977)}, pages
  87--116. Academic Press, London, 1977.

\bibitem{vanlint}
J.~H. van Lint and R.~M. Wilson.
\newblock {\em A course in combinatorics}.
\newblock Cambridge University Press, Cambridge, 1992.

\end{thebibliography}
\bibliographystyle{plain}
\end{document}